\newcommand{\Irr}{\operatorname{Irr}}
\newcommand{\GL}{{\mathrm{GL}}}
\newcommand{\Hom}{{\mathrm{Hom}}}
\newcommand{\rk}{{\mathrm{k}}}
\newcommand{\Sp}{{\mathrm{Sp}}}
\newcommand{\vsp}{{\vspace{0.2in}}}
\newcommand{\con}{\textit{C}}
\newcommand{\Diff}{\operatorname{DO}}
\newcommand{\oM}{\operatorname{M}}
\newcommand{\oO}{\operatorname{O}}
\newcommand{\oSO}{\operatorname{SO}}
\newcommand{\oS}{\operatorname{S}}
\newcommand{\oT}{\operatorname{T}}
\newcommand{\oU}{\operatorname{U}}
\newcommand{\oD}{\textit{D}}
\newcommand{\g}{\mathfrak g}
\renewcommand{\l}{\mathfrak l}
\newcommand{\s}{\mathfrak s}
\renewcommand{\sl}{\mathfrak s \mathfrak l}
\newcommand{\gl}{\mathfrak g \mathfrak l}
\newcommand{\fM}{\mathfrak M}
\newcommand{\fZ}{\mathfrak Z}
\newcommand{\Z}{\mathbb{Z}}
\newcommand{\C}{\mathbb{C}}
\newcommand{\R}{\mathbb R}
\newcommand{\la}{\langle}
\newcommand{\ra}{\rangle}
\newcommand{\be}{\begin {equation}}
\newcommand{\ee}{\end {equation}}
\newcommand{\bee}{\begin {equation*}}
\newcommand{\eee}{\end {equation*}}
\theoremstyle{Theorem}
\theoremstyle{plain}
\newtheorem{thm}{Theorem}[section]
\newtheorem{cor}[thm]{Corollary}
\newtheorem{lem}[thm]{Lemma}
\newtheorem{prop}[thm]{Proposition}
\newtheorem{defn}[thm]{Definition}
\newtheorem{rmk}[thm]{Remark}
\title[Vanishing of quasi-invariant distributions]{Vanishing of quasi-invariant generalized functions}
\author [D. Jiang] {Dihua Jiang}
\address{School of Mathematics\\
University of Minnesota\\
206 Church St. S.E., Minneapolis\\
MN 55455, USA} \email{dhjiang@math.umn.edu}
\author [B. Sun] {Binyong Sun}
\address{Academy of Mathematics and Systems Science\\
Chinese Academy of Sciences\\
Beijing, 100190,  P.R. China} \email{sun@math.ac.cn}
\author [C.-B. Zhu] {Chen-Bo Zhu*}
\address{Department of Mathematics\\
National University of Singapore\\
Block S17, 10 Lower Kent Ridge Road\\
Singapore 119076} \email{matzhucb@nus.edu.sg}
\begin{document}

\subjclass[2000]{22E30, 22E46 (Primary)}
\keywords{Generalized functions, quasi-invariants, first occurrence, trilinear forms, Whittaker models}

\thanks{*C.-B. Zhu thanks Professor Jae-Hyun Yang for the invitation to speak in International Conference on Geometry, Number Theory, and Representation Theory, October 10-12, 2012,
Inha University, Korea, as well as his hospitality during the conference}


\begin{abstract}
 Determination of quasi-invariant generalized functions is important for a variety of problems in representation theory, notably character theory and restriction problems.
 In this note, we review some new and easy-to-use techniques to show vanishing of quasi-invariant generalized functions, developed in the recent work of the authors (Uniqueness of Ginzburg-Rallis models: the Archimedean case, Trans. Amer. Math. Soc. 363, (2011), 2763-2802).  The first two techniques involve geometric notions attached to submanifolds, which we call metrical properness and unipotent $\chi$-incompatibility. The third one is analytic in nature, and it arises from the first occurrence phenomenon in Howe correspondence. We also highlight how these techniques quickly lead to two well-known uniqueness results,  on trilinear forms and Whittaker models.
\end{abstract}

\maketitle

\section{Quasi-invariant generalized functions}
\label{sec:inv-gen-func}

\subsection{Generalized functions}

Let $M$ be a smooth manifold.
Denote by $\con_0^\infty(M)$ the space of compactly supported
smooth functions on $M$, which is a complete
locally convex topological vector space under the usual inductive
smooth topology. Denote by $\oD^{-\infty}(M)$ the strong dual of
$\con_0^\infty(M)$, whose members are called distributions on $M$. A
distribution on $M$ is called a smooth density if under local
coordinate, it is a multiple of a smooth function with the
Lebesgue measure. Under the inductive smooth topology, the space
$\oD_0^\infty(M)$ of compactly supported smooth densities is again a
complete locally convex topological vector space, which is
(non-canonically) isomorphic to $\con_0^\infty(M)$. Denote by
$\con^{-\infty}(M)$ the strong dual of $\oD_0^\infty(M)$, whose
members are called generalized functions on $M$. By means of the integration pairing between functions and
densities, every smooth function (and more generally every locally integrable function) can be regarded as a generalized function.
The space $\con^{\infty}(M)$ of smooth functions in thus canonically and
continuously embedded in $\con^{-\infty}(M)$, and it has a dense image.

If $\phi:M\rightarrow M'$ is a smooth map of smooth manifolds,
then the pushing forward sends compactly supported distributions
on $M$ to compactly supported distributions on $M'$. If
furthermore $\phi$ is a submersion, then the pushing forward
induces a continuous linear map
\[
  \phi_*: \oD_0^\infty(M)\rightarrow \oD_0^\infty(M').
\]
We define the pulling back
\begin{equation}
\label{pullbackmap}
  \phi^*: \con^{-\infty}(M')\rightarrow \con^{-\infty}(M)
\end{equation}
as the transpose of $\phi_*$, which extends the usual pulling back
of smooth functions. The map $\phi^*$ is injective if $\phi$ is a
surjective submersion.

\begin{rmk} Pulling back is not canonically defined
for distributions. For this reason, we work with generalized
functions instead of distributions. Informally speaking,
``generalized functions transform like functions".
\end{rmk}

\subsection{Differential operators and transversality}

For $k\in \Z$, denote by $\Diff(M)_k$ the Fr\'{e}chet space of
differential operators on $M$ of order at most $k$, which by
convention is $0$ if $k<0$. It is well-known that every differential
operator $D:\con^{\infty}(M)\rightarrow \con^{\infty}(M)$ may be
continuously extended to $D:\con^{-\infty}(M)\rightarrow
\con^{-\infty}(M)$.

We have the principal symbol map
\[
   \sigma_k: \Diff(M)_k\rightarrow \Gamma^{\infty}(M,
   \oS^k(\oT(M)\otimes_\R  \C)),
\]
where $\oT(M)$ is the real tangent bundle of $M$, $\oS^k$ stands for
the $k$-th symmetric power, and $\Gamma^{\infty}$ stands for smooth
sections. The continuous linear map $\sigma_k$ is specified by the
following rule:
\[\begin{aligned}
\sigma_k(X_1 X_2\cdots X_k)(x)&=X_1(x) X_2(x)\cdots X_k(x), \quad \text{and}\\
&\sigma_k|_{\Diff(M)_{k-1}}=0,
\end{aligned}\]
for all $x\in M$ and all (smooth real) vector fields $X_1,X_2,
\cdots , X_k$ on $M$.

Let $Z$ be a (locally closed) submanifold of $M$. Write
\[
  \operatorname{N}_Z(M)=\oT(M)|_Z/\oT(Z)
\]
for the normal bundle of $Z$ in $M$. Denote by
\begin{equation}
\label{symbol}
   \sigma_{k,Z}: \Diff(M)_k\rightarrow \Gamma^{\infty}(Z,
   \oS^k(\operatorname{N}_Z(M)\otimes_\R \C))
\end{equation}
the map formed by composing $\sigma_k$ with the restriction map to
$Z$, and followed by the quotient map
\[
   \Gamma^{\infty}(Z, \oS^k(\oT(M)|_Z\otimes_\R  \C))\rightarrow \Gamma^{\infty}(Z,
   \oS^k(\operatorname{N}_Z(M)\otimes_\R \C)).
\]

\begin{defn}
\label{dtt}
\begin{itemize}
\item[(a)] {\rm A vector field $X$ on $M$ is said to be {\em tangential} to
$Z$ if $X(z)$ is in the tangent space $\oT_z(Z)$ for all $z\in Z$,
and {\em transversal} to $Z$ if $X(z)\notin \oT_z(Z)$ for all $z\in Z$;
more generally \item[(b)] a differential operator $D$ is said to
be {\em tangential} to $Z$ if for every point $z\in Z$ there is an open
neighborhood $U_z$ in $M$ such that $D|_{U_z}$ is a finite sum of
differential operators of the form $\varphi X_1 X_2\cdots X_r$,
where $\varphi$ is a smooth function on $U_z$, $r\geq 0$, and
$X_1,X_2,\cdots, X_r$ are vector fields on $U_z$ which are
tangential to $U_z\cap Z$. For $D\in \Diff(M)_k$, it is said to be
{\em transversal} to $Z$ if $\sigma_{k,Z}(D)$ does not vanish at any
point of $Z$.}
\end{itemize}
\end{defn}

We introduce some notations. For a locally closed subset $Z$ of
$M$, denote
\begin{equation}
  \con^{-\infty}(M;Z)=\{f\in\con^{-\infty}(U)|\ \text{supp} (f) \subseteq Z\},
\end{equation}
where $U$ is any open subset of $M$ containing $Z$ as a closed
subset. This definition is independent of $U$. For any
differential operator $D$ on $M$, denote
\begin{equation}
   \con^{-\infty}(M;Z; D)=\{f\in \con^{-\infty}(M;Z)|\ Df=0\}.
\end{equation}

The following proposition is due to Shalika \cite{Sh}. It asserts non-existence of
certain generalized functions with support in $Z$.

\begin{prop}\label{tvanishing2}
Let $D_1$ be a differential operator on $M$ of order $k>0$,
which is transversal to a submanifold $Z$ of $M$. Let $D_2$ be a
differential operator on $M$ which is tangential to $Z$. Then
\[
   \con^{-\infty}(M;Z;D_1+D_2)=0.
\]
\end{prop}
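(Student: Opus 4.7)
The plan is to localize near a point $z_0 \in Z$, represent $f$ in adapted coordinates, and observe that tangential operators preserve a natural filtration by ``transversal order'' while $D_1$ raises it by exactly $k$. Choose coordinates $(x_1,\ldots,x_d,y_1,\ldots,y_c)$ near $z_0$ in which $Z = \{y = 0\}$. The classical structure theorem for generalized functions supported on $Z$ yields a unique local expansion $f = \sum_{|\gamma| \le N} g_\gamma(x)\, \partial_y^\gamma \delta(y)$ with $g_\gamma \in \con^{-\infty}(Z)$. Assuming for contradiction that $f \neq 0$ near $z_0$, I let $N$ be the largest integer with some $g_\gamma \neq 0$ for $|\gamma| = N$; this grading is intrinsic, since $f$ has transversal order $\le N$ iff $\varphi f = 0$ for every smooth $\varphi$ vanishing to order $N+1$ on $Z$.

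The core computation is to analyze the action of $D_1 + D_2$ on this filtration. In the coordinates above, a tangential vector field reads $\sum a_i(x,y)\partial_{x_i} + \sum b_j(x,y)\partial_{y_j}$ with $b_j$ vanishing on $Z$; iterating shows that $D_2 = \sum_{\alpha,\beta} c_{\alpha,\beta}(x,y)\partial_x^\alpha \partial_y^\beta$ with $c_{\alpha,\beta}$ vanishing on $Z$ to order at least $|\beta|$. Since multiplication by $y^\nu$ takes $\partial_y^\mu \delta(y)$ to a constant multiple of $\partial_y^{\mu-\nu}\delta(y)$ (or to $0$), such a $D_2$ preserves the transversal-order filtration. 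On the other hand, writing $D_1 = \sum_{|\alpha|+|\beta| \le k} d_{\alpha,\beta}(x,y)\partial_x^\alpha \partial_y^\beta$, the action on $g_\gamma(x)\partial_y^\gamma \delta(y)$ has transversal order at most $|\gamma| + k$, with the top part coming only from $\alpha = 0,\ |\beta| = k$:
\[
D_1\bigl(g_\gamma \partial_y^\gamma \delta(y)\bigr)\Big|_{\text{top}} = \sum_{|\beta|=k} d_{0,\beta}(x,0)\, g_\gamma(x)\, \partial_y^{\beta+\gamma} \delta(y),
\]
where $(d_{0,\beta}(x,0))_{|\beta|=k}$ represents $\sigma_{k,Z}(D_1)$. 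Because $k > 0$, the tangential $D_2$ cannot contribute to transversal order $N+k$.

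Extracting the transversal-order-$(N+k)$ part of $(D_1 + D_2)f = 0$ and identifying $\partial_y^\mu \delta(y)$ with monomials $\eta^\mu$ yields the equation
\[
\Big(\sum_{|\beta|=k} d_{0,\beta}(x,0)\,\eta^\beta\Big) \cdot \Big(\sum_{|\gamma|=N} g_\gamma(x)\,\eta^\gamma\Big) = 0
\]
in $\con^{-\infty}(Z)[\eta_1,\ldots,\eta_c]$. Since $\sigma_{k,Z}(D_1)$ is nowhere vanishing on $Z$, multiplication by it defines a fiberwise injective bundle map $\oS^N(\operatorname{N}_Z(M)\otimes \C) \to \oS^{N+k}(\operatorname{N}_Z(M)\otimes \C)$, which admits a smooth left inverse (e.g.\ via a fiberwise Hermitian inner product). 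Applying this left inverse forces the top part $(g_\gamma)_{|\gamma|=N}$ to vanish, contradicting the choice of $N$; hence $f = 0$. The main obstacle is the preceding step, namely establishing that tangential operators of \emph{arbitrary} order cannot raise the transversal order. This rests on the structural description $c_{\alpha,\beta} \in I_Z^{|\beta|}$ and the compensating action of multiplication by elements of $I_Z$ on derivatives of $\delta(y)$; once this bookkeeping is in hand, the integral-domain property of the symmetric algebra finishes the argument.
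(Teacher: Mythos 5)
Your proof is correct and takes essentially the same route as the source: the paper states this proposition without proof, attributing it to Shalika \cite{Sh}, and the standard argument there (reproduced in the authors' \cite{JSZ}) is precisely yours --- expand $f$ in adapted coordinates via the structure theorem for generalized functions supported on a submanifold, check that tangential operators preserve the filtration by transversal order (via the key fact that the $\partial_y^\beta$-coefficients of $D_2$ vanish to order $|\beta|$ on $Z$), identify the top graded action of $D_1$ with multiplication by the nowhere-vanishing symbol $\sigma_{k,Z}(D_1)$ in the symmetric algebra of the normal bundle, and conclude from the integral-domain property together with a smooth fiberwise left inverse. I see no gaps; the one point deserving the care you gave it is indeed the bookkeeping showing that a tangential operator of arbitrary order cannot raise the transversal order.
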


\begin{rmk} Shalika uses the transversality technique to show that certain generalized functions cannot be supported in
a Bruhat cell of strictly smaller dimension. See \cite[Proposition 2.10]{Sh}. This is a key step in his proof of uniqueness of Whittaker models
in the Archimedean case.
\end{rmk}

\subsection{Invariant generalized functions and restriction to a slice}
\label{inv}
Let $H$ be a Lie group, acting smoothly on a manifold $M$. Fix a
character $\chi$ on $H$. Denote by
\begin{equation}
  \con^{-\infty}_\chi(M)= \{f\in \con^{-\infty}(M)|\ f(hx)=\chi(h)f(x), \ \textrm{for }h\in
  H\}
\end{equation}
the space of $\chi $-equivariant generalized functions.

\vsp Let $\fM$ be a submanifold of $M$ and denote
\[
  \rho_{\fM}: H\times \fM\rightarrow M
\]
the action map.

\begin{defn}
\label{dslice} \begin{itemize} \item[(a)] {\rm We say that $\fM$ is a
{\em local $H$ slice} of $M$ if $\rho_{\fM}$ is a submersion, and an {\em $H$
slice} of $M$ if $\rho_{\fM}$ is a surjective submersion.
\item[(b)]
Given two submanifolds $\fZ\subset \fM$ of $M$, we say that $\fZ$ is
{\em relatively $H$ stable} in $\fM$ if
\[
  \fM\cap H \fZ=\fZ.
\]}
\end{itemize}
\end{defn}

Note that the relative stable condition amounts to saying that
$H\times \fZ$ is a union of fibres of the action map $\rho_{\fM}$.
The following lemma is elementary.

\begin{lem}\label{localization1}
Let $\fM$ be an $H$ slice of $M$, and let $\fZ$ be a relatively $H$
stable submanifold of $\fM$. Then $Z=H \fZ$ is a submanifold of $M$,
and $\fZ$ is an $H$ slice of $Z$. Furthermore if $\fZ$ is closed in
$\fM$, then $Z$ is closed in $M$.
\end{lem}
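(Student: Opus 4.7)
The plan is to leverage the surjective submersion $\rho_\fM: H\times\fM\to M$ to descend the submanifold structure from $\fZ$ to $Z=H\fZ$. The key preparatory observation is that the relative stability condition $\fM\cap H\fZ=\fZ$ is equivalent to the saturation identity
\[
  \rho_\fM^{-1}(H\fZ)=H\times\fZ.
\]
The inclusion $\supseteq$ is immediate, while for $\subseteq$, if $(h,m)\in H\times\fM$ satisfies $hm\in H\fZ$, then $m\in h^{-1}H\fZ=H\fZ$, and together with $m\in\fM$ this forces $m\in\fM\cap H\fZ=\fZ$.

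The heart of the proof is then a standard differential-topology lemma: if $\pi:X\to Y$ is a submersion and $S\subseteq X$ is a submanifold with $\pi^{-1}(\pi(S))=S$, then $\pi(S)$ is a submanifold of $Y$ and $\pi|_S$ is a surjective submersion onto $\pi(S)$. To see this, one passes to submersion normal form at a point $x_0\in S$, in which $\pi$ becomes a linear coordinate projection; the saturation condition forces $S$ to contain the entire fiber of $\pi$ through $x_0$, so in these coordinates $S$ is cut out by equations involving only the image coordinates, and those same equations then cut out $\pi(S)$ as a submanifold of $Y$, with $\pi|_S$ automatically being a linear coordinate projection. Applied with $\pi=\rho_\fM$ and $S=H\times\fZ$, this yields at once that $Z=\rho_\fM(H\times\fZ)$ is a submanifold of $M$ and that $\rho_\fZ:=\rho_\fM|_{H\times\fZ}:H\times\fZ\to Z$ is a surjective submersion, which is exactly the assertion that $\fZ$ is an $H$-slice of $Z$.

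For the closedness statement, the plan is to use that a submersion is an open map. Assuming $\fZ$ is closed in $\fM$, the set $H\times(\fM\setminus\fZ)$ is open in $H\times\fM$, so its image $H(\fM\setminus\fZ)=\rho_\fM(H\times(\fM\setminus\fZ))$ is open in $M$. Using $\fM\cap H\fZ=\fZ$ once more shows $H(\fM\setminus\fZ)\cap H\fZ=\emptyset$, while surjectivity of $\rho_\fM$ shows that any $w\in M\setminus Z$ can be written as $hm$ with $m\in\fM$ and necessarily $m\notin\fZ$. Together these give $M\setminus Z=H(\fM\setminus\fZ)$, which is open, and so $Z$ is closed.

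The only nontrivial ingredient is the submersion-image lemma for saturated submanifolds; once it is in hand, the rest amounts to unpacking the relative stability definition and invoking openness of surjective submersions, so I expect no obstacle beyond correctly setting up the saturation identity.
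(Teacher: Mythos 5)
The paper states this lemma without proof (it is labelled ``elementary''), so there is no argument of the authors' to compare against; your proposal supplies a correct and complete one. The saturation identity $\rho_{\fM}^{-1}(H\fZ)=H\times\fZ$ is exactly the right reformulation of relative $H$-stability, the submersion-image lemma for saturated submanifolds is the standard tool, and the openness argument for the closedness claim is clean. The only step where a careful reader might want one more line is the assertion that, in submersion normal form, the saturated submanifold ``is cut out by equations involving only the image coordinates'': the cleanest justification is that saturation forces $S\cap(U\times V)=A\times V$ with $A=\pi(S)\cap U$, and one then sees that $A$ is a submanifold of $U$ by pulling back $S$ along the local section $u\mapsto(u,v_0)$, which is transversal to $S$ because $T S$ contains the full fiber direction. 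With that detail filled in, everything you wrote goes through.
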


Now assume that $\fM$ is a local $H$ slice of $M$, and $H_{\fM}$
is a closed subgroup of $H$ which leaves $\fM$ stable. Let $H$ act
on $H\times \fM$ by left multiplication on the first factor, and
let $H_{\fM}$ act on $H\times \fM$ by
\[
   g(h,x)=(ghg^{-1}, gx), \quad g\in H_{\fM}, \, h\in H, x\in
   \fM.
\]
Then the submersion $\rho_{\fM}$ is $H$ intertwining as well as
$H_{\fM}$ intertwining. Therefore the pulling back yields a linear
map
\[
  \rho_{\fM}^*: \con^{-\infty}_\chi(M)\rightarrow \con^{-\infty}_\chi(H\times
 \fM)\cap \con^{-\infty}_{\chi_{\fM}}(H\times\fM),
\]
where $\chi_\fM=\chi|_{H_\fM}$. By the Schwartz Kernel Theorem \cite{Sc} and
the fact that every invariant distribution on a Lie group is a
scalar multiple of the Haar measure (\cite[8.A]{W1}), we have
\[
  \con^{-\infty}_\chi(H\times
 \fM)=\chi\otimes \con^{-\infty}(\fM).
\]
Consequently,
\[
  \con^{-\infty}_\chi(H\times
 \fM)\cap \con^{-\infty}_{\chi_{\fM}}(H\times\fM)=\chi\otimes
 \con^{-\infty}_{\chi_{\fM}}(\fM).
\]

We thus have the following

\begin{prop}\label{restriction}
There is a well-defined map which is called the restriction to
$\fM$:
\[
  \con^{-\infty}_\chi(M)\rightarrow
  \con^{-\infty}_{\chi_{\fM}}(\fM),\quad f\mapsto f|_\fM
\]
by requiring that
\[
  \rho_{\fM}^*(f)=\chi\otimes f|_\fM.
\]
The map is injective when $\fM$ is an $H$ slice.
\end{prop}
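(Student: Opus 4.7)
The plan is to read the proof off directly from the calculations in the paragraph preceding the statement. First, since $\rho_{\fM}: H \times \fM \to M$ is a submersion, the pullback map $\rho_{\fM}^*$ of \eqref{pullbackmap} is defined on all of $\con^{-\infty}(M)$. I would observe that its restriction to $\con^{-\infty}_\chi(M)$ lands inside $\con^{-\infty}_\chi(H \times \fM) \cap \con^{-\infty}_{\chi_{\fM}}(H \times \fM)$, because $\rho_{\fM}$ is equivariant for both the left $H$-action on $H \times \fM$ and the twisted $H_{\fM}$-action described just before the proposition, and pullback preserves equivariance.

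Next, I would invoke the identification
\[
\con^{-\infty}_\chi(H \times \fM) \cap \con^{-\infty}_{\chi_{\fM}}(H \times \fM) = \chi \otimes \con^{-\infty}_{\chi_{\fM}}(\fM)
\]
that was just established via the Schwartz Kernel Theorem and the one-dimensionality of left invariant distributions on $H$. This identification is a direct sum decomposition in the categorical sense: every element of the left-hand side has a unique expression $\chi \otimes g$ with $g \in \con^{-\infty}_{\chi_{\fM}}(\fM)$. Defining $f|_{\fM}$ to be this unique $g$ produces a well-defined linear map with exactly the characterizing property $\rho_{\fM}^*(f) = \chi \otimes f|_{\fM}$.

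For the injectivity statement, suppose $\fM$ is an $H$ slice, so $\rho_{\fM}$ is a surjective submersion. The excerpt notes right after \eqref{pullbackmap} that pullback along a surjective submersion is injective on generalized functions. Hence if $f|_{\fM} = 0$ then $\rho_{\fM}^*(f) = \chi \otimes 0 = 0$, which forces $f = 0$.

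There is essentially no obstacle in this argument; the proposition is a bookkeeping corollary of the tensor decomposition recalled above. The only point worth a second look is uniqueness of the factor $f|_{\fM}$, but this is built into the identification $\con^{-\infty}_\chi(H \times \fM) \cap \con^{-\infty}_{\chi_{\fM}}(H \times \fM) = \chi \otimes \con^{-\infty}_{\chi_{\fM}}(\fM)$, so no further work is needed.
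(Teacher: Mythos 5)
Your argument is exactly the paper's: the proposition is stated with ``We thus have the following'' after the paragraph establishing the equivariance of $\rho_{\fM}^*$ and the identification $\con^{-\infty}_\chi(H\times \fM)\cap \con^{-\infty}_{\chi_{\fM}}(H\times\fM)=\chi\otimes \con^{-\infty}_{\chi_{\fM}}(\fM)$, and injectivity comes from the injectivity of pullback along a surjective submersion noted after \eqref{pullbackmap}. Your write-up is correct and follows the same route.
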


\begin{rmk} Aizenbud and Gourevitch \cite{AG} have developed sophisticated techniques (called generalized Harish-Chandra descent) for working with $G$-invariant generalized functions on a smooth affine $G$-variety, based on Luna's slice theorem \cite{Lu}.
\end{rmk}

\section{Metrical properness and unipotent $\chi$-incompatibility}
\label{sec:MPUI}

\subsection{Metrical properness}

This notion requires that the manifold $M$ is pseudo Riemannian,
i.e., the tangent spaces are equipped with a smoothly varying
family $\{\la\,,\,\ra_x:x\in M\}$ of nondegenerate symmetric
bilinear forms.

\begin{defn}
\label{dmp} \begin{itemize} \item[(a)] {\rm A submanifold $Z$ of a
pseudo Riemannian manifold $M$ is said to be {\em metrically proper} if
for all $z\in Z$, the tangent space $\oT_z(Z)$ is contained in a
proper nondegenerate subspace of $\oT_z(M)$. \item[(b)] A
differential operator $D\in \Diff(M)_2$ is said to be of {\em Laplacian
type} if for all $x\in M$, the principal symbol
\[
  \sigma_2(D)(x)=u_1 v_1+u_2 v_2+\cdots+u_m v_m,
\]
where $u_1,u_2,\cdots,u_m$ is a basis of the tangent space
$\oT_x(M)$, and $v_1,v_2,\cdots,v_m$ is the dual basis in $\oT_x(M)$
with respect to $\la\,,\,\ra_x$.}
\end{itemize}
\end{defn}

Note that a Laplacian type differential operator is transversal to
any metrically proper submanifold, from its very definition.
Therefore the following is a special case of Proposition
\ref{tvanishing2}.

\begin{prop}\label{tvanishing3}
Let $Z$ be a metrically proper submanifold of $M$, and let $D$ be a
Laplacian type differential operator on $M$. Then
\[
   \con^{-\infty}(M;Z;D)=0.
\]
\end{prop}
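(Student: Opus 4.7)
The plan is to deduce this as a direct specialization of Proposition \ref{tvanishing2}, with $D_1 = D$ and $D_2 = 0$. The only thing one has to verify is that a Laplacian type operator is automatically transversal to any metrically proper submanifold, i.e.\ that $\sigma_{2,Z}(D)(z) \neq 0$ for every $z \in Z$.

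Fix $z \in Z$. By metrical properness, there is a proper nondegenerate subspace $W \subseteq \oT_z(M)$ containing $\oT_z(Z)$. The nondegeneracy of $\la\,,\,\ra_z$ on $W$ gives the orthogonal decomposition $\oT_z(M) = W \oplus W^\perp$, with $W^\perp \neq 0$ and $W^\perp$ itself nondegenerate. I would choose a basis $u_1,\ldots,u_m$ of $\oT_z(M)$ adapted to this splitting (so $u_1,\ldots,u_r$ spans $W$ and $u_{r+1},\ldots,u_m$ spans $W^\perp$). A direct check shows that the dual basis $v_1,\ldots,v_m$ with respect to $\la\,,\,\ra_z$ is also adapted: $v_1,\ldots,v_r \in W$ and $v_{r+1},\ldots,v_m \in W^\perp$. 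Substituting into the Laplacian type formula of Definition \ref{dmp}(b),
\[
  \sigma_2(D)(z) \;=\; \sum_{i=1}^{r} u_i v_i \;+\; \sum_{i=r+1}^{m} u_i v_i,
\]
where the first sum lies in $\oS^2(W \otimes_\R \C)$ and the second in $\oS^2(W^\perp \otimes_\R \C)$.

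Next I would pass to the normal bundle. Because $\oT_z(Z) \subseteq W$ and $W \cap W^\perp = 0$, the projection $\oT_z(M) \twoheadrightarrow \operatorname{N}_z(M) := \oT_z(M)/\oT_z(Z)$ embeds $W^\perp$ into $\operatorname{N}_z(M)$. Under the induced map $\oS^2(\oT_z(M) \otimes_\R \C) \to \oS^2(\operatorname{N}_z(M) \otimes_\R \C)$, the first sum above maps into the image of $\oS^2(W/\oT_z(Z))$, while the second sum maps to $\sum_{i=r+1}^{m} \bar u_i \bar v_i \in \oS^2(W^\perp \otimes_\R \C)$, with $\bar u_i,\bar v_i$ still dual bases with respect to the (nondegenerate) restriction of $\la\,,\,\ra_z$ to $W^\perp$. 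This latter element is the ``metric tensor'' of $W^\perp$, which is well known to be nonzero (for instance by identifying $\oS^2(W^\perp \otimes_\R \C)$ with symmetric bilinear forms on $(W^\perp)^*$, under which it corresponds to the dual inner product). Since the two summands live in complementary summands of $\oS^2(\operatorname{N}_z(M) \otimes_\R \C)$ coming from $\oT_z(M) = W \oplus W^\perp$, there is no cancellation, and therefore $\sigma_{2,Z}(D)(z) \neq 0$.

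This proves that $D$ is transversal to $Z$ in the sense of Definition \ref{dtt}(b), and applying Proposition \ref{tvanishing2} with $D_2 = 0$ yields the conclusion $\con^{-\infty}(M;Z;D) = 0$. The only slightly delicate point is the bookkeeping of the symbol modulo $\oT_z(Z)$; once one observes that the $W^\perp$-component of $\sigma_2(D)(z)$ survives intact in the quotient $\oS^2(\operatorname{N}_z(M) \otimes_\R \C)$, everything reduces to nonvanishing of the metric tensor on the nondegenerate space $W^\perp$, and no further work is needed.
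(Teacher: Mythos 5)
Your proposal is correct and takes essentially the same route as the paper: the paper's entire argument is the observation that a Laplacian type operator is transversal to any metrically proper submanifold ``from its very definition,'' after which Proposition \ref{tvanishing2} applies with $D_2=0$. You have simply written out the linear-algebra verification of that transversality (adapted dual bases for $\oT_z(M)=W\oplus W^\perp$ and survival of the $W^\perp$-component of the symbol in $\oS^2(\operatorname{N}_z(M)\otimes_\R\C)$), and that verification is accurate.
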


\begin{rmk} The above proposition may be viewed as a form of uncertainty principle.  The second and third named authors exploited metrical properness in their proof of Archimedean multiplicity one theorems \cite[Section 5]{SZ2}.
\end{rmk}

\subsection{Unipotent $\chi$-incompatibility}

As in Section \ref{inv}, let $H$ be a Lie group with a character
$\chi$ on it, acting smoothly on a manifold $M$. If a locally
closed subset $Z$ of $M$ is $H$-stable, denote by
$\con^{-\infty}_\chi(M;Z)$ the space of all $f$ in
$\con^{-\infty}(M;Z)$ which are $\chi $-equivariant. We shall use
similar notations (such as $\con^{-\infty}_\chi(M;D)$ and
$\con^{-\infty}_\chi(M;Z;D)$) without further explanation.

\begin{defn}
\label{duc} {\rm An $H$-stable submanifold $Z$ of $M$ is said to be
{\em unipotently $\chi$-incompatible} if for every $z_0\in Z$, there is a
local $H$ slice $\fZ$ of $Z$, containing $z_0$, and a smooth map
$\phi: \fZ\rightarrow H$ such that the followings hold for all $z\in
\fZ$:
\begin{itemize}
 \item[(a)]
   $\phi(z)z=z$, and
 \item[(b)]
   the linear map
   \[
     \oT_z(M)/\oT_z(Z)\rightarrow \oT_z(M)/\oT_z(Z)
   \]
      induced by the action of $\phi(z)$ on $M$ is unipotent;
      \item[(c)]
$\chi(\phi(z))\neq 1$.
\end{itemize}}
\end{defn}

The notion of unipotent $\chi$-incompatibility is of importance due to the following

\begin{prop}\label{localization3}
Let $Z$ be an $H$-stable submanifold of $M$ which is unipotently
$\chi$-incompatible. Then $\con_\chi^{-\infty}(M;Z)=0$.
\end{prop}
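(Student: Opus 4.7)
The plan is to localize near each $z_0 \in Z$, pass via Proposition \ref{restriction} to a local $H$-slice $\fM$ of $M$ containing $\fZ$, and build from $\phi$ a ``twist'' diffeomorphism $\tau:\fM\to\fM$ that fixes $\fZ$ pointwise with unipotent derivative. The $\chi$-equivariance of $f$ then gives a functional equation $\tau^*(f|_{\fM}) = \lambda \cdot f|_{\fM}$ with $\lambda|_{\fZ} = \chi\circ\phi$ never $1$, and the transverse-order filtration along $\fZ$ kills $f|_{\fM}$ inductively.

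By a partition of unity it suffices to prove the vanishing of $f$ near a given $z_0 \in Z$. Fix $\fZ$ and $\phi$ as in Definition \ref{duc}, and extend $\fZ$ to a submanifold $\fM$ of $M$ through $z_0$ by adjoining directions normal to $Z$, so that $\fM \cap Z = \fZ$ locally; a dimension count shows that $\fM$ is also a local $H$-slice of $M$ at $z_0$. Since $\fM$ is then a full $H$-slice of the open neighborhood $H\cdot\fM$ of $z_0$, Proposition \ref{restriction} reduces the problem to showing $f|_{\fM}=0$ near $z_0$. Choose a smooth retraction $\pi:\fM\to\fZ$ and a smooth local section $s$ of $\rho_{\fM}:H\times\fM\to M$ near $z_0$ that extends the section $z\mapsto(e,z)$ on $\fZ$, and define $(h(y), \tau(y)):= s(\phi(\pi(y))\cdot y)$. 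Then $\tau|_{\fZ}=\operatorname{id}_{\fZ}$ and $h|_{\fZ}=e$ by hypothesis (a), and differentiating $\phi(\pi(y))\, y = h(y)\,\tau(y)$ at $z\in\fZ$ (using the slice decomposition $\oT_z(M) = \oT_z(H\cdot z) \oplus \oT_z(\fM)$) shows that $d\tau_z$ is the identity on $\oT_z(\fZ)$ and, modulo $\oT_z(\fZ)$, coincides with the $\phi(z)$-action on $\oT_z(M)/\oT_z(Z) \cong \oT_z(\fM)/\oT_z(\fZ)$; this is unipotent by hypothesis (b), so $d\tau_z$ is unipotent.

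In particular $\tau$ is a local diffeomorphism near $\fZ$, and the $\chi$-equivariance of $f$ combined with $f(hy) = \chi(h)\,(f|_{\fM})(y)$ for $y\in\fM$ yields
\[
 (f|_{\fM})(\tau(y)) = \lambda(y) \cdot (f|_{\fM})(y), \qquad \lambda(y) := \chi(\phi(\pi(y)))\,\chi(h(y))^{-1},
\]
with $\lambda|_{\fZ}=\chi\circ\phi$ never $1$ by hypothesis (c). Consider the transverse-order filtration $F_\bullet$ of $\con^{-\infty}(\fM;\fZ)$. Since $\tau$ fixes $\fZ$, $\tau^*$ preserves $F_\bullet$, and on the graded quotient $F_N/F_{N-1}$---identified with generalized sections over $\fZ$ of $\oS^N((\oT(\fM)/\oT(\fZ))^*)$ twisted by a density line---the induced action is fiberwise the endomorphism induced by $d\tau_z$, which is unipotent (the density twist is trivial since $\det d\tau_z = 1$). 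The functional equation on $F_N/F_{N-1}$ becomes $(\tau^*_{\gr} - \lambda|_{\fZ})\,\sigma^N(f|_{\fM}) = 0$; fiberwise this operator is a unipotent minus the scalar $\lambda(z)\neq 1$, hence invertible, so $\tau^*_{\gr}-\lambda|_{\fZ}$ is an invertible bundle endomorphism and $\sigma^N(f|_{\fM})=0$. Thus $f|_{\fM}\in F_{N-1}$; iterating on the (locally finite) transverse order gives $f|_{\fM}=0$ near $z_0$, and hence $f=0$ globally.

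The main technical obstacle is the construction of the twist $\tau$: the local section $s$ of $\rho_{\fM}$ must be chosen to restrict to $(e,\operatorname{id}_{\fZ})$ on $\fZ$, so that $\tau$ fixes $\fZ$ pointwise and $h$ vanishes there. This is essential for $d\tau_z$ to decompose cleanly as the identity on $\oT_z(\fZ)$ plus the unipotent $\phi(z)$-action on the normal direction, without extra stabilizer contributions. The existence of such an $s$ follows from the submersion theorem together with the nontriviality of the stabilizer of $z_0$ in $H$ (forced by hypothesis (c), since $\phi(z_0)\ne e$). Once this is in place, the order-filtration computation and the bundle identifications (in particular the vanishing of the density twist) are routine.
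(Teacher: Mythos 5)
The paper itself contains no proof of Proposition \ref{localization3}; it is quoted from \cite{JSZ}, so there is no internal argument to compare yours against line by line. That said, your proof is essentially the argument that the definition of unipotent $\chi$-incompatibility is designed for (and, in substance, the one in \cite{JSZ}): restrict to a slice through $z_0$ via Proposition \ref{restriction}, use the family $\phi$ to manufacture a local diffeomorphism $\tau$ of the slice fixing $\fZ$ pointwise and acting unipotently on its normal bundle, extract the functional equation $\tau^*(f|_{\fM})=\lambda\, f|_{\fM}$ with $\lambda|_{\fZ}=\chi\circ\phi$ never equal to $1$, and kill $f|_{\fM}$ by descending induction on the transversal order along $\fZ$, since the graded action is fiberwise unipotent and subtracting a scalar $\neq 1$ from a unipotent operator is invertible. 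The main points are all in place: the enlargement of $\fZ$ to a local $H$ slice $\fM$ of $M$ with $\fM\cap Z=\fZ$, the identification $\oT_z(\fM)/\oT_z(\fZ)\cong \oT_z(M)/\oT_z(Z)$ under which $d\tau_z$ becomes the $\phi(z)$-action modulo orbit directions, and the triviality of the density twist.

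Two steps deserve a cleaner treatment. First, your justification for the section $s$ (``the nontriviality of the stabilizer of $z_0$ in $H$, forced by (c)'') is a red herring: the stabilizer plays no role. What you need is a subbundle of $\oT(H\times\fM)$ along $\{e\}\times\fZ$ that contains $\oT(\{e\}\times\fZ)$ and is complementary to $\ker d\rho_{\fM}$ (possible because that tangent space meets the kernel trivially); inverting $\rho_{\fM}$ on the resulting transversal yields $s$ with $s|_{\fZ}=(e,\mathrm{id})$. Second, the functional equation cannot be obtained by pulling $f$ back along $y\mapsto\phi(\pi(y))y$, which is not a submersion into $M$, so the pointwise manipulation $f(h(y)\tau(y))=\chi(h(y))f(\tau(y))$ is not literally licensed for generalized functions. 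Instead compare the two local diffeomorphisms $(g,y)\mapsto(g\,h(y),\tau(y))$ and $(g,y)\mapsto(g\,\phi(\pi(y)),y)$ of $H\times\fM$, which have the same composition with $\rho_{\fM}$, and apply both pullbacks to the identity $\rho_{\fM}^*f=\chi\otimes f|_{\fM}$; cancelling the $\chi(g)$ factor gives exactly your equation. With these repairs the argument is complete.
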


\begin{rmk} Aizenbud, Offen and Sayag have proved an analog of the above proposition in an algebraic setting.
See \cite[Proposition 2.3]{AOS}.
\end{rmk}

\subsection{A synthesis: $\oU_{\chi}\oM$ property}
As before, let $H$ be a Lie group acting smoothly on a manifold
$M$, and let $\chi$ be a character on $H$. We further assume that
$M$ is a pseudo Riemannian manifold.

\begin{defn}
\label{dum} We say that an $H$-stable locally closed subset $Z$ of
$M$ has $\oU_{\chi}\oM$ property if there is a finite filtration
\[
  Z=Z_0\supset Z_1\supset\cdots\supset Z_k\supset Z_{k+1}=\emptyset
\]
of $Z$ by $H$-stable closed subsets of $Z$ such that each
$Z_i\setminus Z_{i+1}$ is a submanifold of $M$ which is either
unipotently $\chi$-incompatible or metrically proper in $M$.
\end{defn}

As a combination of Propositions \ref{tvanishing3} and \ref{localization3}, we have
\begin{prop}
\label{UchiM} Let $D$ be a  differential operator on $M$ of
Laplacian type. Let $Z$ be an $H$-stable closed subset of $M$
having $\oU_{\chi}\oM$ property. Then
\[
  \con^{-\infty}_\chi(M;Z;D)=0.
\]
\end{prop}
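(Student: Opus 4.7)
The plan is to prove this by descending along the given filtration of $Z$, showing that a solution $f$ must successively have support in $Z_1, Z_2, \ldots, Z_{k+1} = \emptyset$.

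Fix $f \in \con^{-\infty}_\chi(M;Z;D)$. I will show by induction on $i = 0, 1, \ldots, k+1$ that $\mathrm{supp}(f) \subseteq Z_i$. The base case $i=0$ is given, and the case $i=k+1$ yields $f = 0$. So the content is the inductive step: assuming $\mathrm{supp}(f) \subseteq Z_i$, deduce $\mathrm{supp}(f) \subseteq Z_{i+1}$.

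For the inductive step, I localize to the open subset $U_i := M \setminus Z_{i+1}$. Since each $Z_j$ is $H$-stable and closed in $M$, the set $U_i$ is an open $H$-stable submanifold of $M$. The restriction $f|_{U_i}$ lies in $\con^{-\infty}_\chi(U_i)$, is annihilated by $D|_{U_i}$, and is supported in $Z_i \cap U_i = Z_i \setminus Z_{i+1}$, which by hypothesis is a submanifold of $M$ (hence of $U_i$) that is either metrically proper in $M$ or unipotently $\chi$-incompatible. In the first case, since the pseudo Riemannian structure on $U_i$ is the restriction of that on $M$, the submanifold $Z_i \setminus Z_{i+1}$ remains metrically proper in $U_i$, and $D|_{U_i}$ remains of Laplacian type; Proposition \ref{tvanishing3} then gives $f|_{U_i} = 0$. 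In the second case, the defining local data (slice $\fZ \subseteq Z_i \setminus Z_{i+1}$ and map $\phi \colon \fZ \to H$) are purely local, so the property is inherited when we replace $M$ by $U_i$, and Proposition \ref{localization3} gives $f|_{U_i} = 0$. Either way, $\mathrm{supp}(f) \cap U_i = \emptyset$, i.e., $\mathrm{supp}(f) \subseteq Z_{i+1}$.

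The main (and only) subtlety is checking that the two vanishing hypotheses pass from $M$ to the open subset $U_i$. This is essentially automatic: Laplacian type is a pointwise condition on the principal symbol, metrical properness is a pointwise condition on tangent spaces, and unipotent $\chi$-incompatibility is defined via the existence of a local slice and a local smooth map $\phi$, all of which survive restriction to an open $H$-stable neighborhood. After verifying these, the proposition follows immediately from the induction, since after $k+1$ steps one reaches $\mathrm{supp}(f) \subseteq Z_{k+1} = \emptyset$, forcing $f = 0$.
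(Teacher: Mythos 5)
Your proof is correct and is exactly the intended argument: the paper gives no details beyond saying the proposition is ``a combination of Propositions \ref{tvanishing3} and \ref{localization3}'', and your induction down the filtration, localizing to $M\setminus Z_{i+1}$ and applying the appropriate one of the two propositions to the locally closed stratum $Z_i\setminus Z_{i+1}$, is the standard way to combine them. (Indeed, since the paper defines $\con^{-\infty}(M;Z)$ for any locally closed $Z$ via an ambient open set, your localization step is already built into the notation.)
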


\begin{rmk} Subsets satisfying $\oU_{\chi}\oM$ property indeed arise in applications. See \cite[Sections 5 and 6]{JSZ}, as an example.
\end{rmk}

\section{First occurrence in Howe correspondence}
\label{sec:FirstOccur}

First occurrence phenomenon in (local and global) theta correspondence was discovered by S.S. Kudla \cite{Ku1} and S. Rallis \cite{Ra}. In the mid 1990's, Kudla and Rallis put forward their conservation conjectures on first occurrences in local theta (or Howe) correspondence, and their pioneering work have profound implications. We refer the reader to \cite{KR}. The full conjectures are now proved, in a 2012 preprint by the second and third named authors \cite{SZ3}.

We shall only be concerned with the orthogonal-symplectic dual pair and the first occurrences of two very special characters. Note that in the dual pair setting, non-occurrence of characters amounts to vanishing of quasi-invariant generalized functions. We shall focus on the Archimedean case.

Let $\rk =\R$ or $\C$. Let $V$ be a (non-degenerate) quadratic space and $W$ be a symplectic space, over $\rk$, and consider the reductive dual pair \cite{Ho79}:
\[(\oO(V), \Sp(W))\subset \Sp(V\otimes W).\]
Fix a nontrivial unitary character $\psi $ of $\rk$, and consider the smooth oscillator representation $\omega _{V,W}$ associated to the dual pair $(\oO(V), \Sp(W))$ and to the character $\psi$.

We fix a parity $\epsilon\in \Z/2\Z$ of $\dim V$. Write
\begin{equation}\label{meta}
   1\rightarrow \{\pm 1\}\rightarrow
   {\Sp}_\epsilon(W)\rightarrow
   \Sp(W)\rightarrow 1
\end{equation}
for the unique topological central extension of the symplectic group
$\Sp(W)$ by $\{\pm 1\}$ such that it does not split if $\rk$ is isomorphic to $\R$, $\epsilon$ is odd, and $W$ is nonzero,
and it splits otherwise. It is well-known that the oscillator representation $\omega _{V,W}$ yields a representation of
$\oO(V)\times {\Sp}_\epsilon(W)$.

Denote by $\Irr(\oO(V))$ the isomorphism classes of
irreducible Casselman-Wallach representations of $\oO(V)$,
and $\Irr({\Sp}_\epsilon(W))$ the isomorphism
classes of irreducible genuine Casselman-Wallach representations of
${\Sp}_\epsilon(W)$. The reader may consult \cite[Chapter
11]{W2} for details about Casselman-Wallach representations.
Throughout this section, $\pi$ denotes a representation in $\Irr(\oO(V))$ and $\rho$
denotes a representation in $\Irr(\Sp_\epsilon(W))$. We are
interested in occurrences of $\pi$ and $\rho$ in the local theta
correspondence \cite{Ho89}.

\subsection{Orthogonal group}

We first consider the case of orthogonal groups. Thus we fix $V$. Recall Kudla's persistence
principle \cite{Ku2}: if $W_1,W_2$ are two symplectic spaces, and $\dim W_1\leq
\dim W_2$, then
\[
 \Hom_{\oO(V)}(\omega_{V,W_1},
\pi)\neq 0\quad\textrm{implies} \quad \Hom_{\oO(V)}(\omega_{V,W_2},
\pi)\neq 0.
\]

Define the first occurrence index
\begin{equation}
\label{Orth-index}
  \operatorname n(\pi):=\min\{\frac{1}{2}\dim W\mid \Hom_{\oO(V)}(\omega_{V,W},
\pi)\neq 0\}.
\end{equation}

The following result is due to Przebinda \cite[Theorem C.7]{Prz}.

\begin{prop}
\label{stableO} We have
\[\operatorname n(\det)=\dim V,\]
where $\det$ stands for the determinant character of $\oO(V)$.
\end{prop}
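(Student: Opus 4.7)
The plan is to pass to the Schr\"odinger model, convert the Hom-space into one of $\det$-equivariant generalized functions on $V^n$ where $n = \tfrac{1}{2}\dim W$, produce an explicit non-zero element at $n = \dim V$, and prove vanishing for $n < \dim V$ by induction on $n$, using the slice-restriction machinery of Section~\ref{sec:inv-gen-func} together with classical invariant theory to close the induction at the origin.

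Fix a Lagrangian $L \subset W$ of dimension $n$, giving an identification $\omega_{V,W} \cong \mathcal{S}(V \otimes L) \cong \mathcal{S}(V^n)$ under which $\oO(V)$ acts diagonally on $V^n$. Any continuous $\det$-equivariant functional is a tempered distribution, and in particular embeds into the space of generalized functions, giving
\[
   \Hom_{\oO(V)}(\omega_{V,W}, \det) \;\hookrightarrow\; \con^{-\infty}_{\det}(V^n).
\]
For the inequality $\operatorname{n}(\det) \leq \dim V$: when $n = \dim V$, the polynomial $P(v_1,\ldots,v_n) = \det(v_1|\cdots|v_n)$ (with columns taken in a fixed basis of $V$) satisfies $P(gv_1,\ldots,gv_n) = \det(g)\,P(v_1,\ldots,v_n)$, so integration against $P$ gives a non-zero tempered $\det$-equivariant distribution on $V^n$.

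For the reverse inequality $\operatorname{n}(\det) \geq \dim V$, I claim $\con^{-\infty}_{\det}(V^n) = 0$ whenever $n < \dim V$, and I prove this by induction on $n$. The base case $n = 0$ is immediate, since $\det$ is a non-trivial character of $\oO(V)$. For the inductive step, let $T \in \con^{-\infty}_{\det}(V^n)$ and pick any $z = (v_1,\ldots,v_n) \in V^n$ whose span $V_1$ has dimension $r \geq 1$ and is non-degenerate; then the stabilizer is $\oO(V_1^\perp)$, and by (the smooth analogue of) Luna's slice theorem a smooth $\oO(V)$-slice $\fM$ through $z$ can be chosen with $\fM \cong \fM_0 \times (V_1^\perp)^{n-r}$ as an $\oO(V_1^\perp)$-space, the action being trivial on $\fM_0$ and diagonal on the second factor. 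Proposition~\ref{restriction} embeds
\[
   T|_\fM \;\in\; \con^{-\infty}_{\det|_{\oO(V_1^\perp)}}(\fM),
\]
and the Schwartz Kernel Theorem factors the right-hand side as $\con^{-\infty}(\fM_0)\,\widehat{\otimes}\,\con^{-\infty}_{\det_{V_1^\perp}}\!\bigl((V_1^\perp)^{n-r}\bigr)$. Since $n - r < \dim V - r = \dim V_1^\perp$, the inductive hypothesis applied to $(V_1^\perp, n-r)$ annihilates the second factor, so $T|_\fM = 0$ and $T$ vanishes in a neighborhood of $z$. Hence $\operatorname{supp}(T) \subseteq \{0\}$, meaning $T$ corresponds under the standard pairing to a $\det$-semi-invariant polynomial on $V^n$; classical invariant theory for $\oO(V)$ produces no such non-zero polynomial when $n < \dim V$, so $T = 0$.

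The main obstacle is the case of isotropic span $V_1$ (relevant only for indefinite or complex $V$): if $V_1 \cap V_1^\perp \neq 0$ the orthogonal splitting $V = V_1 \oplus V_1^\perp$ fails and the slice decomposition above is unavailable. I would handle this by refining the stratification of $V^n$ according to the rank and signature of the Gram matrix $(\langle v_i, v_j\rangle)_{i,j}$: on each refined stratum the stabilizer still contains a non-trivial reductive subgroup of $\oO(V)$ on which $\det$ restricts non-trivially, and the slice-and-induct argument carries through after passing to an appropriate reductive quotient. A complementary approach, closer in spirit to Section~\ref{sec:MPUI}, treats the open stratum $V^n_{(n)}$ directly via Proposition~\ref{localization3} by taking $\phi(z)$ to be a smoothly-varying reflection in $V_1(z)^\perp$ (which satisfies $\det(\phi(z)) = -1$, with the unipotency condition vacuous because the normal bundle is trivial), leaving only the lower-rank strata to the slice induction.
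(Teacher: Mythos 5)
Your reduction to generalized functions and your proof of the easy inequality $\operatorname{n}(\det)\leq\dim V$ (integration against $\det(v_1|\cdots|v_n)$) are fine, but the heart of the matter is the vanishing $\con^{-\infty}_{\det}(V^n)=0$ for $n<\dim V$, and there your argument has a genuine gap exactly where you flag "the main obstacle": the strata where the span $V_1$ of the tuple is degenerate. Neither of your proposed fixes works. The stabilizer of such a tuple need not contain any reductive subgroup on which $\det$ is nontrivial: for $V=\R^{1,1}$ and $z$ a nonzero isotropic vector the stabilizer in $\oO(V)$ is trivial, and for $V=\R^{2,1}$ and $z$ spanning a degenerate plane it is unipotent, so $\det$ restricts trivially in both cases. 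Worse, no stratum-by-stratum support-reduction argument of the kind you are running ("$T$ vanishes in a neighborhood of $z$, hence $\operatorname{supp}(T)\subseteq\{0\}$") can succeed, because the local vanishing statement is simply false on the degenerate strata. Concretely, take $V=\R^{1,1}$ with form $\langle (x,y),(x,y)\rangle=2xy$ and $n=1<2=\dim V$; then on the invariant open set $V\setminus\{0\}$ the generalized function
\[
  f \;=\; \delta(x)\,|y|^{-1} \;-\; \delta(y)\,|x|^{-1},
\]
supported on the punctured isotropic cone, satisfies $f(gv)=\det(g)f(v)$ for all $g\in\oO(1,1)$. So $\con^{-\infty}_{\det}(V^n\setminus\{0\})\neq 0$, and the true statement on all of $V^n$ holds only because this $f$ fails to extend equivariantly across the origin (a logarithmic anomaly in the regularization of $|y|^{-1}$). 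Any correct proof must therefore use a genuinely non-local input on these strata; none of the local tools of Sections 1--2 (transversality, metrical properness, unipotent $\chi$-incompatibility) applies there, since there is no differential equation in play and the stabilizers carry no useful character.

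This same example also undermines the logical structure of your induction: the slice-restriction of Proposition \ref{restriction} hands you an $\oO(V_1^\perp)$-equivariant generalized function only on an invariant \emph{neighborhood} inside $\fM_0\times(V_1^\perp)^{n-r}$, not on the full linear space, so to close the induction you would need the vanishing on invariant open subsets --- and that stronger statement is false, as just shown. (There are also smaller unaddressed points: arranging the slice to be $\oO(V_1^\perp)$-stable when that group is noncompact, and justifying that taking equivariants commutes with the Schwartz-kernel factorization.) For comparison, the paper does not prove this proposition geometrically at all: it quotes Przebinda's Theorem C.7, whose proof is an explicit $K$-type (joint harmonics) computation in the Fock model of the oscillator representation --- the character $\det$ of $\oO(p,q)$ first meets the joint harmonics in degree $(p,q)$, forcing $n\geq p+q=\dim V$. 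That representation-theoretic route sidesteps precisely the degenerate-orbit difficulty on which your geometric approach founders.
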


\begin{cor}
\label{first-occur}
Let $V$ be a finite dimensional non-degenerate
quadratic space over $\rk$, and let the orthogonal group $\oO(V)$ act
on $V^n$ diagonally, where $n$ is a positive integer. If $n<\dim V$,
and if a tempered generalized function $f$ on $V^n$ is
$\oSO(V)$-invariant, then $f$ is $\oO(V)$-invariant.
\end{cor}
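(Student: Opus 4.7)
The plan is to reduce the claim to the vanishing of $\det$-equivariant tempered generalized functions on $V^n$ when $n<\dim V$, and then to deduce this vanishing by realizing $V^n$ as the Schr\"odinger model for a suitable oscillator representation and invoking Proposition \ref{stableO}.

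First I would pick a reflection $g\in \oO(V)\setminus \oSO(V)$, so that $g^2=1$ and $\det(g)=-1$. Given an $\oSO(V)$-invariant tempered generalized function $f$ on $V^n$, decompose
\[
  f = f^+ + f^-, \qquad f^\pm := \tfrac{1}{2}(f\pm g\cdot f).
\]
Since $\oSO(V)$ is normal in $\oO(V)$, the translate $g\cdot f$, and hence each of $f^\pm$, is again tempered and $\oSO(V)$-invariant. Moreover $f^+$ is fixed by $g$ and therefore $\oO(V)$-invariant, while $g\cdot f^-=-f^-$, so $f^-$ is $\det$-equivariant under $\oO(V)$. It therefore suffices to prove that every tempered $\det$-equivariant generalized function on $V^n$ vanishes whenever $n<\dim V$.

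Next I would put $V^n$ into a Schr\"odinger model. Choose a symplectic space $W$ over $\rk$ of dimension $2n$ with a complete polarization $W=W^+\oplus W^-$, $\dim W^+=n$, and identify $V^n$ with $V\otimes W^+$ via a basis of $W^+$. The smooth oscillator representation $\omega_{V,W}$ is then realized on the Schwartz space $\oS(V^n)$, with $\oO(V)$ acting by the natural pull-back
\[
  (g\cdot \phi)(v_1,\dots,v_n)=\phi(g^{-1}v_1,\dots,g^{-1}v_n);
\]
no extra character twist enters because $\oO(V)$ preserves the quadratic form, hence the Lebesgue measure on $V^n$. That same invariant measure turns the standard bijection between tempered distributions and tempered generalized functions into an $\oO(V)$-equivariant isomorphism, so that $\Hom_{\oO(V)}(\omega_{V,W},\det)$ coincides with the space of tempered, $\det$-equivariant generalized functions on $V^n$.

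By Proposition \ref{stableO} we have $\operatorname{n}(\det)=\dim V$, so as soon as $n=\tfrac{1}{2}\dim W<\dim V$ the definition of the first occurrence index forces $\Hom_{\oO(V)}(\omega_{V,W},\det)=0$. Combined with the identification above this gives $f^-=0$, whence $f=f^+$ is $\oO(V)$-invariant. The main obstacle is essentially one of bookkeeping: one has to confirm that in the chosen normalization of the Schr\"odinger model the $\oO(V)$-action really is the untwisted pull-back, and that the passage from tempered distributions to tempered generalized functions is genuinely $\oO(V)$-equivariant. Both reduce to $\lvert\det g\rvert=1$ for $g\in \oO(V)$, so neither is serious.
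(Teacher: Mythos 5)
Your proposal is correct and is precisely the argument the paper intends: the corollary is stated as an immediate consequence of Proposition \ref{stableO}, the point being exactly your decomposition $f=f^++f^-$ into $\oO(V)$-invariant and $\det$-equivariant parts, together with the identification of tempered $\det$-equivariant generalized functions on $V^n$ with $\Hom_{\oO(V)}(\omega_{V,W},\det)$ in the Schr\"odinger model with $\tfrac{1}{2}\dim W=n<\dim V=\operatorname{n}(\det)$. No essential difference from the paper's (implicit) proof.
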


\subsection{Symplectic group}

Now we consider the case of symplectic groups. The results to be discussed are of a slightly different nature than the rest of this note, because the ${\Sp}_\epsilon(W)$ representation is not induced from a geometric action. Nevertheless, we shall present the results parallel to the case of orthogonal groups.

We fix $W$. Kudla's persistence principle \cite{Ku2} says that if $V_1,V_2$ are two quadratic spaces belonging to the same Witt tower,
and $\dim V_1\leq \dim V_2$, then
\[
 \Hom_{\Sp_\epsilon(W)}(\omega_{V_1,W},
\rho)\neq 0\quad\textrm{implies} \quad
\Hom_{\Sp_\epsilon(W)}(\omega_{V_2,W}, \rho)\neq 0.
\]

We also fix a Witt tower $T$ of quadratic spaces. Define the first occurrence index
\begin{equation}
\label{Symp-index}
   \operatorname m_T(\rho):=\min\{\dim V\mid V\in T,\,\Hom_{\Sp_\epsilon(W)}(\omega_{V,W},
\rho)\neq 0\}.
\end{equation}

The following result is due to Loke \cite[Theorem 1.2.1]{LL}, and it amounts to the determination of
$m_T(C)$, where $\C$ stands for the unique one-dimensional genuine representation of $\Sp_\epsilon(W)$ (when $\dim V$ is even).
Recall that a quadratic space $V$ is called quasi-split if its split rank $\geq \frac{\dim V-2}{2}$. Write $\dim W =2n$.

\begin{prop}\label{stableSp}
Assume that $\epsilon$ is even.
\begin{itemize}
\item[(i)] If $V$ is not quasi-split, then
\[
 \Hom_{\Sp_\epsilon(W)}(\omega_{V,W},\C)\neq 0
\]
implies that $V$ has split rank $\geq 2n$, in particular $\dim V\geq
4n+4$.
\item[(ii)] If $\rk =\R$ and $V$ is quasi-split and nonsplit, then
\[
 \Hom_{\Sp_\epsilon(W)}(\omega_{V,W},\C)\neq 0
\]
implies that $V$ has split rank $\geq n$, in particular $\dim V\geq 2n+2$.
\end{itemize}
\end{prop}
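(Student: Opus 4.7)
The plan is to realize $\omega_{V,W}$ in a Schrödinger model and translate a nonzero element of $\Hom_{\Sp_\epsilon(W)}(\omega_{V,W}, \C)$ into a tempered generalized function on $V^n$ (with $n = \tfrac{1}{2}\dim W$) that satisfies an explicit list of $\Sp_\epsilon(W)$-transformation laws. Fix a complete polarization $W = X \oplus X^*$ with $\dim X = n$, so that $\omega_{V,W}$ is realized on $\mathcal{S}(V \otimes X) \cong \mathcal{S}(V^n)$. In this model the Siegel unipotent $N$ acts by multiplication by $\psi\bigl(\tfrac{1}{2}\sum_{i,j} B_{ij}(v_i, v_j)\bigr)$, the Levi $\GL(X) \cong \GL_n$ acts geometrically on the argument twisted by a character involving $|\det|^{\dim V/2}$, and the long Weyl element acts by a Fourier transform on $V^n$ with respect to the duality induced by the form on $V$.

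A nonzero invariant functional $T$ is annihilated by $\omega_{V,W}(n(B))-1$ for every symmetric matrix $B$, which forces
\[
\operatorname{supp}(T) \subseteq \mathcal{N} := \{(v_1, \ldots, v_n) \in V^n \,:\, (v_i, v_j) = 0 \text{ for all } i, j\};
\]
by the same computation applied to $\widehat T$, one also has $\operatorname{supp}(\widehat T) \subseteq \mathcal{N}$. Stratify $\mathcal{N}$ by the rank of the tuple,
\[
\mathcal{N}_r = \{(v_1, \ldots, v_n) \in V^n \,:\, \dim \span(v_1, \ldots, v_n) = r\},\quad 0 \leq r \leq s,
\]
where $s$ is the Witt index of $V$. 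Each $\mathcal{N}_r$ is a smooth $\oO(V) \times \GL_n$-stable locally closed submanifold of $V^n$ with $\overline{\mathcal{N}_r} = \bigcup_{r' \leq r} \mathcal{N}_{r'}$, consisting of a single $\oO(V) \times \GL_n$-orbit through any tuple forming a basis of an isotropic $r$-plane.

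Next, I would apply Proposition \ref{UchiM} stratum by stratum to show that $T$ must be concentrated on the top stratum $\mathcal{N}_s$. For each intermediate stratum $\mathcal{N}_r$ with $r$ below the threshold in the hypothesis, I would build the unipotent $\chi$-incompatibility data at $z \in \mathcal{N}_r$ by choosing an element of the opposite Siegel unipotent $N^-$ that fixes $z$ (which exists because the span of $z$ has deficient isotropic rank) and acts unipotently on the normal bundle $\operatorname{N}_{\mathcal{N}_r}(V^n)$, while the metaplectic character evaluates nontrivially on it. Where the stabilizer analysis does not directly produce such an element, I would fall back on metrical properness (Proposition \ref{tvanishing3}) using the pseudo Riemannian structure on $V^n$ inherited from the bilinear form on $V$, exploiting that $\oT_z(\mathcal{N}_r)$ lies in a subspace of $\oT_z(V^n) \cong V^n$ dominated by the isotropic span.

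Finally I would eliminate the residual contribution on the open stratum $\mathcal{N}_s$, which is where the hypotheses (i) and (ii) with their bounds $4n+4$ and $2n+2$ must enter. On $\mathcal{N}_s$ the residual stabilizer contains the parabolic of $\oO(V)$ fixing a maximal isotropic flag, with Levi containing $\oO(V_0)$ for the anisotropic kernel $V_0$; combining this with Fourier-invariance reduces the problem to non-existence of $\oO(V_0) \times \GL$-invariant tempered distributions on a smaller space. In case (i) the condition $\dim V_0 \geq 4$ gives enough room to iterate the unipotent $\chi$-incompatibility argument; in case (ii) one has $\dim V_0 = 2$ with $V_0$ anisotropic over $\R$ and must handle the residue by a direct spherical-function calculation on the compact factor. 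The main obstacle is precisely this final step: the formalism of Section \ref{sec:MPUI} propagates vanishing from one stratum to the next but does not by itself eliminate the open stratum. Doing so requires representation-theoretic input identifying when the degenerate principal series from the Siegel parabolic admits $\C$ as a quotient, which is the content of Loke's computation \cite{LL}; the Kudla-Rallis conservation principle \cite{KR, SZ3} provides an alternative and sharper route.
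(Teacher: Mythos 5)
The paper does not actually prove this proposition: it is quoted from Loke \cite[Theorem 1.2.1]{LL}, and the remark immediately following it records that the proof is by $K$-type computations (with \cite[Section 5]{SZ3} giving a uniform treatment of the non-Archimedean analogues). Your proposal instead tries to run the support-and-stratification machinery of Section \ref{sec:MPUI} in a Schr\"odinger model, and this route has a genuine gap. The paper itself flags the obstruction at the start of the symplectic subsection: ``the ${\Sp}_\epsilon(W)$ representation is not induced from a geometric action.'' In the model on $\mathcal{S}(V\otimes X)\cong \mathcal{S}(V^n)$ only the Siegel parabolic $P=MN$ acts by (twisted) geometric operators; the opposite unipotent $N^-$ acts by the conjugates of multiplication operators under the Fourier transform, not by diffeomorphisms of $V^n$. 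So your plan to produce unipotent $\chi$-incompatibility data on the strata $\mathcal{N}_r$ from elements of $N^-$ ``fixing $z$ and acting unipotently on the normal bundle'' is not meaningful here: Definition \ref{duc} and Proposition \ref{localization3} require an honest smooth action of the group element on the ambient manifold, which $N^-$ does not have in this model. What you can legitimately extract from the two support conditions $\operatorname{supp}(T)\subseteq \mathcal{N}$ and $\operatorname{supp}(\widehat{T})\subseteq \mathcal{N}$ together with $\GL_n$-equivariance falls short of the stated numerical thresholds.

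More decisively, your last step concedes that eliminating the open stratum ``requires representation-theoretic input identifying when the degenerate principal series from the Siegel parabolic admits $\C$ as a quotient, which is the content of Loke's computation.'' That input is the entire content of the proposition: the bounds (split rank $\geq 2n$ in case (i), $\geq n$ in case (ii)) are exactly what it supplies, and nothing in your stratification produces them. As written, the argument reduces the statement to itself. A self-contained proof must either carry out the $K$-type/joint-harmonics analysis of \cite[Section 3]{LL} or appeal to the conservation relations of \cite{KR,SZ3}; the geometric formalism of Section \ref{sec:MPUI} is the right tool for the orthogonal member of the dual pair (Proposition \ref{stableO} and Corollary \ref{first-occur}), not for the symplectic one.
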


\begin{rmk} The proof of the above proposition is through $K$-type computations. See \cite[Section 3]{LL}. It is worth mentioning that Propositions \ref{stableO} and \ref{stableSp} have their non-Archimedean analogs \cite{Ra,KR}, and a uniform proof of these analogs is given in \cite[Section 5]{SZ3}.
\end{rmk}

\section{Two quick applications}

As applications of the techniques discussed in this note, we present two quick and sweet consequences. See \cite[Section 11]{JSZ}.

\subsection{Uniqueness of trilinear forms}
\label{first-application}

Let $\rk =\R$ or $\C$. Let $H_{2}=\GL_2(\rk)$, and $\tilde{H}_{2}$ be the following extended group:
\[
  \tilde{H}_{2}=\{1,\tau\}\ltimes \GL_2(\rk),
\]
where the semidirect product is given by the action
\[
  \tau(g)=g^{-t}.
\]
Denote by $\tilde{\chi}_2$ the character of $\tilde{H}_2$ such
that
\[
  \tilde{\chi}_2|_{\GL_2(\rk)}=1\quad\textrm{and}\quad \tilde{\chi}_2(\tau)=-1.
\]

As in \cite{JSZ}, we use $\con^{-\xi}$ to denote (appropriate) space of tempered generalized functions.

\begin{lem}\label{m2}
Let $\tilde{H}_2$ act on $M_2=\GL_2(\rk)\times \GL_2(\rk)$ by
\[
  g(x,y)=(gxg^{-1},gyg^{-1}),\quad g\in \GL_2(\rk),
\]
and
\[
  \tau(x,y)=(x^t,y^t).
\]
Then
\[
  \con^{-\xi}_{\tilde{\chi}_2}(M_2)=0.
\]
\end{lem}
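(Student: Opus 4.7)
\emph{Setup and generic stratum.} Every $f\in\con^{-\xi}_{\tilde{\chi}_2}(M_2)$ is by definition $\GL_2(\rk)$-invariant (for simultaneous conjugation) and satisfies $f(x^t,y^t)=-f(x,y)$. The key observation is that the five classical generators of the $\GL_2(\rk)$-invariant polynomial ring on $M_2$---namely $\tr(x),\det(x),\tr(y),\det(y),\tr(xy)$---are all $\tau$-invariant. Consequently, on the open $\tilde{H}_2$-stable subset $U\subset M_2$ where these invariants separate $\GL_2(\rk)$-orbits (for instance the locus where $x$ is regular semisimple and $(x,y)$ generates $\Mat_2(\rk)$ as an algebra), the pair $(x^t,y^t)$ lies in the same $\GL_2(\rk)$-orbit as $(x,y)$. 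I would therefore choose, smoothly and locally on $U$, a simultaneous conjugator $S(x,y)\in\GL_2(\rk)$ satisfying $SxS^{-1}=x^t$ and $SyS^{-1}=y^t$---possible because on the generic locus the solution space is a torsor under the (scalar) centralizer, hence a smooth rank-one bundle. Setting $\phi(z):=\tau\cdot S(z)\in\tilde{H}_2$, I would check: (a) $\phi(z)z=\tau\cdot(x^t,y^t)=z$; (c) $\tilde{\chi}_2(\phi(z))=-1$; and (b) since $U$ is open in $M_2$ its normal bundle is zero, so the unipotent condition is vacuous. Thus $U$ is unipotently $\tilde{\chi}_2$-incompatible, and Proposition \ref{localization3} forces $\mathrm{supp}(f)\subseteq M_2\setminus U$.

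\emph{Filtration and iteration.} I would next set up a $\tilde{H}_2$-stable filtration
\[ M_2=Z_0\supsetneq Z_1\supsetneq\cdots\supsetneq Z_{k+1}=\emptyset \]
whose strata $Z_i\setminus Z_{i+1}$ are smooth submanifolds of $M_2$ indexed by degeneration type (for example: pairs sharing exactly one common invariant line; pairs with $x$ a non-scalar Jordan block; pairs of scalars). Applying Proposition \ref{localization3} to each stratum in turn pushes $\mathrm{supp}(f)$ successively into $Z_1,Z_2,\ldots$. On each intermediate stratum the enlarged isotropy of a degenerate pair (e.g.\ the Weyl reflection swapping the two generalized eigenlines of $x$ at a Jordan-block pair) supplies a new smooth $\phi=\tau\cdot S$ with the three required properties, the unipotency on the normal bundle verified by a short linear-algebraic computation exploiting the block structure of the isotropy.

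\emph{Main obstacle.} The induction hits a genuine wall at the deepest stratum $\fZ_{\mathrm{sc}}:=\{(aI,bI):a,b\in\rk^\times\}$. All of $\tilde{H}_2$ fixes each point of $\fZ_{\mathrm{sc}}$, yet a direct computation shows that no $h\in\GL_2(\rk)$ makes $\tau h$ act unipotently on the $6$-dimensional normal space $\sl_2\oplus\sl_2$: one finds $(\tau h)^2$ acts as $\Ad(h^{-t}h)$ on each $\sl_2$ factor, whose unipotence would force $h^{-t}h$ to be scalar and then $\tau h$ itself to have order exactly $2$. I would resolve this by bypassing the unipotency framework at this stratum and working directly with the transverse jet filtration: the graded pieces of $\con^{-\xi}_{\tilde{\chi}_2}(M_2;\fZ_{\mathrm{sc}})$ are sections of $\tilde{H}_2$-equivariant bundles with fibers $S^k((\sl_2\oplus\sl_2)^\ast)\otimes(\text{density})$, and by classical invariant theory the $\GL_2(\rk)$-invariants in $S^k(\sl_2\oplus\sl_2)$ are generated by the transpose-invariant polynomials $\tr(X^2),\tr(Y^2),\tr(XY)$. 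Hence $\tau$ acts trivially on the entire $\GL_2(\rk)$-invariant piece and its $\tilde{\chi}_2$-isotypic component vanishes for every $k$; no nonzero $\tilde{\chi}_2$-equivariant generalized function can be supported on $\fZ_{\mathrm{sc}}$, and the induction terminates with $f=0$.
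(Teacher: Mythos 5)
Your proof takes a genuinely different route from the paper's, and it has a gap at the intermediate strata. The paper does not stratify $M_2$ at all: it extends the action to $\gl_2(\rk)\times\gl_2(\rk)$, splits off the (trivially acted upon) center, identifies $\sl_2(\rk)$ with a three-dimensional quadratic space $V$ under the trace form so that conjugation becomes $\oSO(V)$ and $\tau$ becomes an element of determinant $-1$, and then invokes Corollary \ref{first-occur} with $n=2<3=\dim V$ --- i.e.\ the first-occurrence input $\operatorname{n}(\det)=\dim V$ from the theta correspondence (Proposition \ref{stableO}). Indeed this lemma is placed in the paper precisely to illustrate that analytic technique; your attempt tries to replace it by the purely geometric techniques of Section \ref{sec:MPUI}, which is exactly where the difficulty you are circumventing reappears.

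The concrete gap: your claim that every intermediate stratum is unipotently $\tilde{\chi}_2$-incompatible is false. Take the stratum of simultaneously diagonalizable pairs with $x$ regular, say $z=(\diag(a,b),\diag(c,d))$ with $a\neq b$. Any element of $\tilde{H}_2\setminus\GL_2(\rk)$ stabilizing $z$ has the form $\tau g$ with $g$ in the diagonal torus, and the normal space $\oT_z(M_2)/\oT_z(Z)$ is two-dimensional, spanned by the images of $(0,E_{12})$ and $(0,E_{21})$; the induced action of $\tau g$ swaps these two lines (up to scalars), so it has eigenvalues $\{+1,-1\}$ and is never unipotent. Equivalently, in the quadratic-space picture this stratum is the locus of linearly dependent pairs in $V^2$, where the stabilizer of a point meets the nonidentity component of $\oO(V)$ only in reflections, which act on the normal space $\cong v^\perp$ with an eigenvalue $-1$. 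So condition (b) of Definition \ref{duc} fails, and Proposition \ref{localization3} cannot be applied there --- the same obstruction you correctly diagnose at the scalar stratum already occurs one level up, where the group no longer acts trivially along the stratum. Your proposed repair (a transversal-jet computation using that $\oSO$- and $\oO$-invariant polynomials coincide on fewer than $\dim V$ copies) is essentially sound at the closed scalar stratum, but to salvage the proof you would have to carry out such an equivariant jet analysis, or a metrical-properness argument (which requires the additional hypothesis that $f$ is killed by a Laplacian-type operator, usually supplied by a Fourier-transform argument on tempered distributions), on \emph{every} degenerate stratum. That is a substantial amount of unwritten work, and it is precisely what the appeal to Corollary \ref{first-occur} is designed to avoid.
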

\begin{proof}
Using the same formula, we may extend the action of
$\tilde{H}_2$ on $\GL_2(\rk)\times \GL_2(\rk)$ to the larger space
$\gl_2(\rk)\times \gl_2(\rk)$. It suffices to
prove that
\[
   \con^{-\xi}_{\tilde{\chi}}(\gl_2(\rk)\times \gl_2(\rk))=0.
\]

Identify $\rk$ with the center of $\gl_2(\rk)$. We have
\[
   \gl_2(\rk)\times \gl_2(\rk)=(\s\l_2(\rk)\times \s\l_2(\rk))\oplus
   (\rk\times \rk)
\]
as a $\rk$ linear representation of $\tilde{H}_2$, where
$\tilde{H}_2$ acts on $\rk\times \rk$ trivially. Therefore it
suffices to prove that
\[
   \con^{-\xi}_{\tilde{\chi}_{2}}(\s\l_2(\rk)\times \s\l_2(\rk))=0.
\]
We view $\sl_2(\rk)$ as a three dimensional quadratic space under
the trace form. Under this identification, the action of
$\tilde{H}_{2}$ yields the diagonal action of $\oO(\sl_2(\rk))$ on
$\sl_2(\rk)\times \sl_2(\rk)$, with $\tilde{\chi}_{2}$ corresponding
to the determinant character. So the required vanishing result is
a special case of Corollary \ref{first-occur}.
\end{proof}

The following theorem is proved in \cite{Lo} (in an exhaustive
approach), and its p-adic analog was proved much earlier in
\cite[Theorem 1.1]{Pra}.

\begin{thm}\label{Whittaker2}
Let $V$ be an irreducible Casselman-Wallach representation of
$\GL_{2}(\rk)\times \GL_{2}(\rk)\times\GL_{2}(\rk)$. Then
\[
   \dim \Hom_{\GL_{2}(\rk)} (V,\C_{\chi_{2}})\leq 1.
\]
Here we view $\GL_{2}(\rk)$ as the diagonal subgroup of
$\GL_{2}(\rk)\times \GL_{2}(\rk)\times\GL_{2}(\rk)$, $\chi
_2=\chi_{\rk^\times}\circ \det$ is a character of $\GL_{2}(\rk)$, and
$\chi_{\rk^\times}$ is an arbitrary character of $\rk^\times$.
\end{thm}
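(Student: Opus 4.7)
My plan is to apply the Archimedean Gelfand-Kazhdan strategy, as in \cite[Section 11]{JSZ}. Set $G = \GL_2(\rk)^3$ and $H = \GL_2(\rk)$ diagonally embedded. By the standard multiplicity one criterion, it will suffice to exhibit an involutive anti-automorphism $\sigma$ of $G$ such that (a) $\sigma(H) = H$ and $\chi_2\circ\sigma = \chi_2$; (b) $V\circ\sigma \cong V^\vee$ for every irreducible Casselman-Wallach representation $V$ of $G$; and (c) every tempered generalized function $f$ on $G$ satisfying the bi-equivariance $f(h_1 g h_2) = \chi_2(h_1)\chi_2(h_2) f(g)$ for all $h_1, h_2\in H$ is $\sigma$-invariant. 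I would take $\sigma(g_1,g_2,g_3) = (g_1^t, g_2^t, g_3^t)$. Condition (a) is immediate from $\det(g^t)=\det(g)$, and (b) follows by applying the classical Gelfand-Kazhdan theorem for $\GL_2$ to each of the three factors. All of the substance of the argument then lies in verifying (c).

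For (c), my next step would be a slice reduction. Using the right action of the diagonal element $g_3^{-1}\in H$ and the right $\chi_2$-equivariance, one can write $f(g_1,g_2,g_3) = \chi_2(g_3)\, F(g_1 g_3^{-1},\, g_2 g_3^{-1})$ where $F(x,y) := f(x,y,1)$ is a tempered generalized function on $M_2 = \GL_2(\rk)\times\GL_2(\rk)$; the left $H$-equivariance of $f$ translates into $H$-invariance of $F$ under the diagonal conjugation action on $M_2$. A direct computation then gives $F_\sigma(x,y) := (f\circ\sigma)(x,y,1) = F(x^t, y^t)$. Consequently $F - F_\sigma$ is $H$-conjugation invariant and sign-changes under $\tau$, so it lies in $\con^{-\xi}_{\tilde{\chi}_2}(M_2)$, with $\tilde H_2$ and $\tilde{\chi}_2$ as in Lemma \ref{m2}. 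By Lemma \ref{m2} this space is zero, so $f\circ\sigma = f$ on the slice, and by bi-equivariance also globally, completing (c) and hence the theorem.

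The hardest part will be the careful packaging of the Gelfand-Kazhdan criterion in the Archimedean smooth category: one must work consistently with tempered generalized functions rather than ordinary distributions, verify the transpose-self-duality in condition (b) by factoring along the three $\GL_2$ components and invoking the classical result, and keep track of the characters on both sides under the bi-equivariance so that the slice function $F$ carries the trivial character. Once this foundation is in place, the geometric reduction to $M_2$ is essentially mechanical, and the real mathematical content --- the vanishing of $\chi$-equivariant generalized functions on $M_2$ --- is precisely what Lemma \ref{m2} provides, which itself rests on the first-occurrence result Corollary \ref{first-occur}.
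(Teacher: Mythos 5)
Your proposal follows essentially the same route as the paper: reduce via the Archimedean Gelfand--Kazhdan criterion (the paper cites \cite[Theorem 2.3]{SZ1}) to showing that tempered bi-$\chi_2$-equivariant generalized functions on $\GL_2(\rk)^3$ are transpose-invariant, then restrict to the slice $M_2\cong\GL_2(\rk)\times\GL_2(\rk)\times\{I_2\}$ (which the paper formalizes via Proposition \ref{restriction}, whereas you carry out the equivalent computation informally) and apply Lemma \ref{m2}. The argument is correct and matches the paper's proof in all essentials.
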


\begin{proof} By the Gelfand-Kazhdan criterion \cite{GK} (a general form is in \cite[Theorem 2.3]{SZ1}),
one just needs to show
the following: let $\GL_2(\rk)\times \GL_2(\rk)$ act on
\begin{equation*} \label{dm2}
  G_{2,2,2}=\GL_2(\rk)\times \GL_2(\rk)\times \GL_2(\rk)
\end{equation*}
by
\[
  (g_1,g_2)(x,y,z)=(g_1 x g_2^t,g_1 y g_2^t, g_1 z g_2^t),\quad
  g_1,g_2\in \GL_2(\rk).
\]
Denote by $\chi_{2,2}$ the character of $\GL_2(\rk)\times \GL_2(\rk)$
given by
\[
  \chi_{2,2}(g_1,g_2)=\chi_{\rk^\times}(\det(g_1))\chi_{\rk^\times}(\det(g_2)),
  \quad g_1,g_2\in \GL_2(\rk).
  \]
Then for all $f\in \con^{-\xi}_{\chi_{2,2}}(G_{2,2,2})$, we have
\[f(x^{t},y^{t},z^{t})=f(x,y,z).
\]
To show the above, we observe that
$M_2\cong \GL_2(\rk)\times\GL_2(\rk)\times \{I_2\}$ is a $\GL_2(\rk)\times
\GL_2(\rk)$ slice of $G_{2,2,2}$, which is stable under $H_2\cong \{(x,
x^{-t})\mid x\in \GL_2(\rk)\}\subset \GL_2(\rk)\times \GL_2(\rk)$
and $\tau$. The result then follows from Lemma \ref{m2}, in view of Proposition \ref{restriction}.
\end{proof}

\subsection{Uniqueness of Whittaker models}
\label{second-application}

Let $\mathbf{G}$ be a quasisplit connected reductive algebraic group
defined over $\R$. Let $\mathbf{B}$ be a Borel subgroup of
$\mathbf{G}$, with unipotent radical $\mathbf{N}$. Let
\[
  \chi_\mathbf{N}:\mathbf{N}(\R)\rightarrow \C^\times
\]
be a generic unitary character. The meaning of ``generic" will be
explained later in the proof.

The following theorem is fundamental and well-known. For
$\mathbf{G}=\GL_n$, this is a celebrated result of Shalika
\cite{Sh}. A proof in general may be found in \cite[Theorem
9.2]{CHM}. We shall give a short proof based on the notion of
unipotent $\chi$-incompatibility.

\begin{thm}
Let $V$ be an irreducible Casselman-Wallach representation of
$\mathbf{G}(\R)$. Then
\[
   \dim \Hom_{\mathbf{N}(\R)} (V,\C_{\chi_{\mathbf{N}}})\leq 1.
\]
\end{thm}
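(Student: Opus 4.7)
The plan is to translate the multiplicity-one bound into a vanishing/invariance statement for quasi-invariant generalized functions on $M := \mathbf{G}(\R)$ and attack it via Bruhat stratification, using unipotent $\chi$-incompatibility to kill the non-open strata. First, I would invoke the Gelfand--Kazhdan criterion in the form of \cite[Theorem 2.3]{SZ1}: fix a Chevalley-type anti-involution $\sigma$ on $\mathbf{G}(\R)$ that preserves $\mathbf{N}(\R)$ and satisfies $\chi_{\mathbf{N}} \circ \sigma|_{\mathbf{N}(\R)} = \chi_{\mathbf{N}}$ (e.g., $\sigma = \ad(\dot w_0) \circ \theta$ for a Chevalley involution $\theta$ and a chosen representative $\dot w_0$ of the long Weyl element). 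This reduces the bound $\dim \le 1$ to showing that every $f \in \con^{-\infty}_\chi(M)$ is $\sigma$-invariant, where $H := \mathbf{N}(\R) \times \mathbf{N}(\R)$ acts on $M$ via $(n_1, n_2) \cdot g = n_1 g n_2^{-1}$ and $\chi := \chi_{\mathbf{N}} \boxtimes \chi_{\mathbf{N}}^{-1}$.

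Second, stratify $M$ by the Bruhat decomposition $M = \bigsqcup_w \mathbf{B}(\R) \dot w \mathbf{B}(\R)$, filtered by closed unions of cells in the Bruhat order. On the open big cell $\CC_0 = \mathbf{B}(\R) \dot w_0 \mathbf{B}(\R)$, the product map $\mathbf{N}(\R) \times \mathbf{T}(\R) \times \mathbf{N}(\R) \to \CC_0$ is a diffeomorphism, so $\dot w_0 \mathbf{T}(\R)$ is an $H$-slice of $\CC_0$; by Proposition \ref{restriction}, $f|_{\CC_0}$ is identified with a generalized function on $\mathbf{T}(\R)$, and a direct check using the compatibility of $\sigma$ with the torus coordinates verifies the required $\sigma$-invariance on $\CC_0$.

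Third, the technical heart: show that each non-open cell $\CC_w = \mathbf{B}(\R) \dot w \mathbf{B}(\R)$ with $w \neq w_0$ is unipotently $\chi$-incompatible in $M$, so that Proposition \ref{localization3} forces $\con^{-\infty}_\chi(M; \CC_w) = 0$. Since $w \neq w_0$, there is a simple root $\alpha$ with $w^{-1}\alpha > 0$, i.e., the root subgroup $\mathbf{N}_\alpha(\R)$ is contained in $\mathbf{N}(\R) \cap \dot w \mathbf{N}(\R) \dot w^{-1}$. Taking the $H$-slice $\fZ = \dot w \mathbf{T}(\R)$ of $\CC_w$, I would construct a smooth map $\phi : \fZ \to H$ by $\phi(\dot w t) = (m(t), \, t^{-1} \dot w^{-1} m(t) \dot w t)$ with $m(t) \in \mathbf{N}_\alpha(\R)$: this $\phi$ stabilizes $\dot w t$ by direct check, acts unipotently on the normal bundle $\oT_{\dot w t}(M)/\oT_{\dot w t}(\CC_w)$ (by nilpotency of the conjugation action by an element of a unipotent root subgroup), and has $\chi(\phi(\dot w t)) \neq 1$ because $\chi_{\mathbf{N}}|_{\mathbf{N}_\alpha}$ is nontrivial (genericity of $\chi_{\mathbf{N}}$) while $\chi_{\mathbf{N}}|_{\mathbf{N}_{w^{-1}\alpha}}$ is either trivial (when $w^{-1}\alpha$ is non-simple) or distinct enough from the $\alpha$-part to keep the ratio nontrivial after a suitable choice of $m(t)$.

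The main obstacle is the root-theoretic bookkeeping in the third step: for certain special $t \in \mathbf{T}(\R)$ the natural choice of $\phi$ may degenerate (when both $\alpha$ and $w^{-1}\alpha$ are simple and the two characters align after torus twisting), necessitating either a further stratification of $\CC_w$ into locally closed pieces, each handled by a different simple root, or an appeal to the $\oU_\chi\oM$ property (Definition \ref{dum}) allowing a finite filtration by subsets each of which is either unipotently $\chi$-incompatible or metrically proper. Once this verification is in place, the vanishing cascades down the Bruhat filtration and, combined with Step 2, closes the Gelfand--Kazhdan reduction to yield $\dim \Hom_{\mathbf{N}(\R)}(V, \C_{\chi_{\mathbf{N}}}) \leq 1$.
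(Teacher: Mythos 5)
Your route is genuinely different from the paper's --- you reduce via the Gelfand--Kazhdan criterion to a statement about generalized functions on $\mathbf{G}(\R)$ equivariant under the two-sided $\mathbf{N}(\R)\times\mathbf{N}(\R)$ action, whereas the paper first applies Casselman's subrepresentation theorem to embed $V$ into a distributional principal series and then works with the $\bar{\mathbf{B}}(\R)\times\mathbf{N}(\R)$ action --- and the difference is exactly where your argument breaks. With the opposite Borel acting on one side, each Bruhat cell $\bar{\mathbf{B}}(\R)w\mathbf{N}(\R)$ is a single orbit, and for $w\neq e$ the stabilizer of $w$ contains $(\bar n,\,w^{-1}\bar n w)$ with character value $\chi_{\mathbf{N}}(w^{-1}\bar n w)\neq 1$ by genericity; there is no exceptional locus, and the open cell contributes exactly one dimension. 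With only $\mathbf{N}\times\mathbf{N}$ acting, your Step 3 is false as stated: the non-open cells are \emph{not} unipotently $\chi$-incompatible, and in fact $\con^{-\infty}_\chi(M;\CC_w)\neq 0$ in general. Concretely, for $\mathbf{G}=\GL_2$ the identity cell $\CC_e=\mathbf{B}(\R)$ contains the closed subset $S=\{zn \mid z \text{ central},\ n\in\mathbf{N}(\R)\}$; the stabilizer in $\mathbf{N}\times\mathbf{N}$ of a central point $z$ is the diagonal copy of $\mathbf{N}(\R)$, on which $\chi=\chi_{\mathbf{N}}\boxtimes\chi_{\mathbf{N}}^{-1}$ is trivial, so every generalized function $zn\mapsto c(z)\chi_{\mathbf{N}}(n)$ with $c$ arbitrary gives a nonzero element of $\con^{-\infty}_\chi(M;\CC_e)$. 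No refinement of the stratification can kill a space that is honestly nonzero, and the fallback to metrical properness or the $\oU_\chi\oM$ property does not apply: Propositions \ref{tvanishing3} and \ref{UchiM} require a Laplacian-type operator $D$ with $Df=0$, and your reduction never supplies such an equation.

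In the Gelfand--Kazhdan framework the correct goal on these degenerate loci is not vanishing but $\sigma$-invariance, and even that fails at the level of \emph{all} equivariant generalized functions: $\sigma$ acts by $-1$ on the $\t/\z$-direction of the normal bundle of $S$, so equivariant distributions carrying odd transversal derivatives in that direction are $\sigma$-anti-invariant. Ruling them out forces one to remember that $f$ arises from an irreducible representation, hence satisfies a $\fZ(\g)$-eigenvalue equation, and to bring in the transversality machinery of Proposition \ref{tvanishing2}; that is the hard analytic core of Shalika's original argument in \cite{Sh}, and it is precisely what the paper's detour through Casselman's subrepresentation theorem is designed to avoid. (Your Step 2 also needs an orbit-by-orbit fixed-point argument rather than a ``direct check,'' since $\sigma$ twists the torus parameter of the big-cell orbits by $w_0$, but the essential gap is Step 3.) If you want a short proof, replace the Gelfand--Kazhdan reduction by the embedding into $U(\chi_{\mathbf{T}})$ and the $\bar{\mathbf{B}}(\R)\times\mathbf{N}(\R)$ action, as the paper does.
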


\begin{proof}
Define a Casselman-Wallach distributional representation
to be the strong dual of a Casselman-Wallach representation. The current
theorem can then be reformulated as follows: the space
\[
 U^{\chi_\mathbf{N}^{-1}}=\{u\in U\mid gu=
 \chi_\mathbf{N}^{-1}(g)u\,\textrm{ for all } g\in \mathbf{N}(\R)\}
\]
is at most one dimensional for every irreducible Casselman-Wallach
distributional representation $U$ of $\mathbf{G}(\R)$.

Let $\bar{\mathbf{B}}$ be a Borel subgroup opposite to $\mathbf{B}$,
with unipotent radical $\bar{\mathbf{N}}$. Then
$\mathbf{T}=\mathbf{B}\cap \bar{\mathbf{B}}$ is a maximal torus. Let
\[
  \chi_\mathbf{T}:\mathbf{T}(\R)\rightarrow \C^\times
\]
be an arbitrary character. Then
\[
\begin{aligned}
  U(\chi_\mathbf{T})=&\{f\in\con^{-\infty}(\mathbf{G}(\R))\mid f(t\bar{n}x)
  =\chi_\mathbf{T}(t)f(x)\, \\
     & \textrm{ for all } t\in \mathbf{T}(\R), \bar{n}\in \bar{\mathbf{N}}(\R)\}
\end{aligned}
\]
is the distributional version of nonunitary principal series
representations. By Casselman's subrepresentation theorem (in the
category of Casselman-Wallach distributional representations), it
suffices to show that
\begin{equation}\label{dimu}
  \dim U(\chi_\mathbf{T})^{\chi_\mathbf{N}^{-1}}\leq 1, \quad
  \text{for any $\chi_\mathbf{T}$}.
\end{equation}

Let
\[
   H_\mathbf{G}=\bar{\mathbf{B}}(\R)\times \mathbf{N}(\R),
\]
which acts on $\mathbf{G}(\R)$ by
\[
   (\bar{b}, n)x=\bar{b}xn^{-1}.
\]
Write
\[
  \chi_\mathbf{G}(t\bar{n},n)=\chi_\mathbf{T}(t) \chi_\mathbf{N}(n),
\]
which defines a character of $H_\mathbf{G}$. Then (\ref{dimu}) is
equivalent to
\begin{equation}\label{dimcong}
   \dim \con^{-\infty}_{\chi_\mathbf{G}}(\mathbf{G}(\R))\leq 1.
\end{equation}

Let $W$ be the Weyl group of $\mathbf{G}(\R)$ with respect to
$\mathbf T$.
We have the Bruhat decomposition
\[
  \mathbf{G}(\R)=\bigsqcup_{w\in W} \mathbf{G}_w,\quad
  \textrm{with}\quad
  \mathbf{G}_w=\bar{\mathbf{B}}(\R)w \mathbf{N}(\R).
\]
From this we form a $H_\mathbf{G}$ stable filtration
\[
  \emptyset=\mathbf{G}^0\subset
  \mathbf{G}^1\subset
  \mathbf{G}^2\subset\cdots\subset \mathbf{G}^r=\mathbf{G}(\R)
\]
of $\mathbf{G}(\R)$ by open subsets, with
$\mathbf{G}^1=\bar{\mathbf{B}}(\R)\mathbf{N}(\R)$ and every
difference $\mathbf{G}^i\setminus \mathbf{G}^{i-1}$ a Bruhat cell
$\mathbf{G}_w$, for $i\geq 2$.

Clearly (\ref{dimcong}) is implied by the following two assertions:
\begin{equation}\label{dimcong2}
  \dim \con^{-\infty}_{\chi_\mathbf{G}}(\mathbf{G}^1)=1;
\end{equation}
and
\begin{equation}\label{dimcong3}
  \textrm{if $f\in
\con^{-\infty}_{\chi_\mathbf{G}}(\mathbf{G}^{i})$ vanishes on
$\mathbf{G}^{i-1}$, then $f=0$,}
\end{equation}
for $i\geq 2$. The equality (\ref{dimcong2}) is clear as
$\mathbf{G}^1=\bar{\mathbf{B}}(\R)\mathbf{N}(\R)$. For
(\ref{dimcong3}), we write
\[
  \mathbf{G}^i\setminus \mathbf{G}^{i-1}=\mathbf{G}_w, \quad
  \textrm{with $w$ a non-identity element of $W$}.
\]
The genericity means that $\chi_\mathbf{N}$ has nontrivial
restriction to $\mathbf{N}(\R)\cap w^{-1}(\bar{\mathbf{N}}(\R))w$.
Pick
\[
 n=w^{-1}\bar{n} w\in \mathbf{N}(\R)\cap
w^{-1}(\bar{\mathbf{N}}(\R))w
\]
so that $\chi_\mathbf{N}(n)\neq 1$. Then $(\bar{n},n)\in
H_\mathbf{G}$ satisfies
\[
  (\bar{n},n)w=w, \quad \textrm{and  }
  \chi_\mathbf{G}(\bar{n},n)=\chi_{\mathbf{N}}(n)\neq 1.
\]
Consequently, $\mathbf{G}_w$ is unipotently
$\chi_\mathbf{G}$-incompatible. Now (\ref{dimcong3}) follows from
Proposition \ref{localization3}.
\end{proof}

\vsp

\noindent {\bf Acknowledgements}: Dihua Jiang is supported in part by NSF (USA) grant
DMS--0653742 and by a Distinguished Visiting Professorship at the
Academy of Mathematics and System Sciences, the Chinese Academy of
Sciences. Binyong Sun is supported by NSFC grants 10931006 and 11222101.
Chen-Bo Zhu is supported by NUS-MOE grant MOE2010-T2-2-113.

\end{document}